\documentclass{article}

\usepackage{cite}
\usepackage{xcolor}

\usepackage[centertags]{amsmath}
\usepackage{amssymb}
\usepackage{amsthm}
\usepackage{ifthen}

\usepackage{todonotes}

\newcommand{\vectorSequence}[1][n]{
\ifthenelse{\equal{#1}{0}}{\underline{\eta_{#1}}}{}
\ifthenelse{\equal{#1}{n}}{\underline{\eta_{n}}}{}
\ifthenelse{\equal{#1}{\underline{d}}}{\underline{\eta_{n,\textbf{d}}}}{}
}

\def\cB{{\mathcal B}}
\def\cC{{\mathcal C}}

\def\cL{{\mathcal L}}

\def\cR{{\mathcal R}}

\def\cT{{\mathcal T}}

\def\Tr{{\mathrm{Tr}}}

\newcommand{\F}{\mathbb{F}}


\newtheorem{theorem}{Theorem}
\newtheorem{lemma}{Lemma}

\newcommand{\vn}[1]{{#1}} 

\theoremstyle{remark}

\begin{document}
\title{On the distribution of the Rudin-Shapiro function for finite fields}
\author{
{\sc C\'ecile Dartyge$^1$, L\'aszl\'o M\'erai$^2$,  Arne Winterhof$^2$}\\
$^1$ Institut \'Elie Cartan, Universit\'e de Lorraine\\ BP 239, 54506 Vand\oe{}uvre Cedex, France\\
e-mail: cecile.dartyge@univ-lorraine.fr
 \\
$^2$ Johann Radon Institute for\\ 
Computational and Applied Mathematics\\
Austrian Academy of Sciences\\
Altenbergerstr.\ 69, 4040 Linz, Austria\\
e-mail: \{laszlo.merai,arne.winterhof\}@oeaw.ac.at}

\date{}

\maketitle

\begin{center}{\large In memory of Christian Mauduit}
\end{center}

\bigskip

\begin{abstract}
Let $q=p^r$ be the power of a prime $p$ and $(\beta_1,\ldots ,\beta_r)$ be an ordered basis of $\F_q$ over $\F_p$.
For 
$$\xi=\sum\limits_{j=1}^r x_j\beta_j\in \F_q \quad \mbox{with digits }x_j\in\F_p,$$
we define the Rudin-Shapiro function $R$ on $\F_q$ by 
$$R(\xi)=\sum\limits_{i=1}^{r-1} x_ix_{i+1}, \quad \xi\in \F_q.$$
For a non-constant polynomial $f(X)\in \F_q[X]$ and $c\in \F_p$ we study the number of solutions
$\xi\in \F_q$ of $R(f(\xi))=c$. 
If the degree $d$ of $f(X)$ is fixed, $r\ge 6$ and $p\rightarrow \infty$, the number of solutions is asymptotically $p^{r-1}$ for any $c$. The proof is based on the Hooley-Katz Theorem.
\end{abstract}

\bigskip

MSC 2020. 11A63,11T23,  11T30\\

Keywords. finite fields, digit sums, Hooley-Katz Theorem, polynomial equations, Rudin-Shapiro function

\section{Introduction}

In recent years, many spectacular results have been obtained on important problems combining some arithmetic properties of the integers and some conditions on their digits in a given basis, see for example \cite{Bou13,Bou15,DMR19,MR09,MR10,M16,M18,SW19,Sw20}.
In particular, Drmota, Mauduit and Rivat \cite{DMR19} and M\"ullner \cite{M18} showed that Thue-Morse sequence and Rudin-Shapiro sequence along squares are both normal, that is, each binary pattern of the same length appears asymptotically with the same frequency.

A natural question is to study analog problems in finite fields, see for example \cite{DMS15,DS13,DES16,G17,M19,P19,Sw18b,Sw18a,Sw18c}.
Many of these problems can be solved for finite fields although their analogs for integers are actually out of reach.

In particular, it is conjectured but not proved yet that the subsequences of the Thue-Morse sequence and Rudin-Shapiro sequence along any polynomial of degree $d\ge 3$ 
are normal, see \cite[Conjecture 1]{DMR19}.
Even the weaker problem of determining the frequency of $0$ and $1$ in the subsequence of the Thue-Morse sequence and Rudin-Shapiro sequence along any polynomial of degree $d\ge 3$ seems to be out of reach, see 
\cite[above Conjecture 1]{DMR19}.
However, the analog of the latter weaker problem for the Thue-Morse sequence in the finite field setting was settled by the first author and S\'ark\"ozy\vn{~\cite{DS13}}.

This paper deals with the following analog of the frequency problem for the Rudin-Shapiro sequence along polynomials. 

Let $q=p^r$ be the power of a prime $p$ 
and $\cB=( \beta_1,\ldots ,\beta_r)$ be an ordered basis of the finite field $\F_q$ over $\F_p$.
Then any $\xi\in\F_q$ has a unique representation  
$$
\xi=\sum_{j=1}^r x_j\beta_j \quad\mbox{with } x_j\in \F_p,\quad j=1,\ldots,r.
$$
The coefficients $x_1,\ldots , x_r$ are called the {\em digits} with respect to the basis $\cB$.

In order to consider the finite field analogue of the Rudin-Shapiro sequence along polynomial values, we define the {\em Rudin-Shapiro function} $R(\xi)$ for the finite field $\F_q$ with respect to the basis $\cB$ by
$$
R(\xi)=\sum_{i=1}^{r-1}x_ix_{i+1},\quad \xi=x_1\beta_1+\cdots +x_r\beta_r\in \F_q, \quad r\ge 2.
$$
For $f(X)\in\F_q[X]$ and $c\in\F_p$
we put 
$$
\cR (c,f)=\{\xi\in\F_q : R(f(\xi))=c\}.
$$
Our goal is to prove that the size of $\cR(c,f)$ is asymptotically the same for all~$c$.

Our main result is the following theorem.

\begin{theorem}
\label{thm:RS}
Let $f(X)\in\F_q[X]$ be of degree $d\ge 1$. 
For $c\in\F_p$ we have
$$
\left||\cR (c,f)|-p^{r-1}\right|\le C_{d,r} p^{(3r+1)/4 -h_{r,c} },
$$
where $h_{r,c}$ is defined by
\begin{equation*}
h_{r,c}= 
\left\{\begin{array}{ll}
        3/4, & r \mbox{ even and }c\ne 0, \\
        1/2, & r \mbox{ odd and }c\ne 0,\\
        1/4, & r \mbox{ even and }c=0,\\
        0, & r \mbox{ odd and } c=0,
        \end{array}\right.       
\end{equation*}
and $C_{d,r}$ is a constant depending only on $d$ and $r$. 
\end{theorem}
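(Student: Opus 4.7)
The plan is to use additive characters to detect $R(f(\xi))=c$, reducing $|\cR(c,f)|$ to a sum of exponential sums over $\F_p^r$ driven by an explicit polynomial of degree $2d$, and then to apply the Hooley--Katz theorem to each such sum. The four values of $h_{r,c}$ will correspond to four distinct bounds on the dimension of the singular locus of the leading form.

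\textbf{Step 1 (Fourier expansion).} Fix a nontrivial additive character $\psi$ of $\F_p$. By orthogonality,
\[
|\cR(c,f)| \;=\; \frac{1}{p}\sum_{t\in\F_p}\psi(-tc)\,S(t),\qquad S(t):=\sum_{\xi\in\F_q}\psi\bigl(t\,R(f(\xi))\bigr).
\]
The $t=0$ contribution is $q/p=p^{r-1}$, producing the main term. The task is reduced to bounding $|S(t)|$ uniformly for $t\in\F_p\setminus\{0\}$.

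\textbf{Step 2 (Polynomial reformulation).} Writing $\xi=\sum_j x_j\beta_j$ with digits $x_j\in\F_p$ and expanding $f(\xi)=\sum_j y_j(x_1,\dots,x_r)\beta_j$, each coordinate $y_j\in\F_p[x_1,\dots,x_r]$ has total degree at most $d$. Consequently
\[
G(x)\;:=\;R(f(\xi))\;=\;\sum_{i=1}^{r-1} y_i(x)\,y_{i+1}(x)
\]
lies in $\F_p[x_1,\dots,x_r]$ and has degree $2d$, and $S(t)=\sum_{x\in\F_p^r}\psi(t\,G(x))$.

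\textbf{Step 3 (Hooley--Katz).} The Hooley--Katz theorem yields
\[
|S(t)| \;\ll_{d,r}\; p^{(r+s+1)/2},
\]
where $s$ is the dimension of the singular locus, in $\PP^{r-1}_{\overline{\F_p}}$, of the projective hypersurface cut out by the top-degree form $G^{\sharp}$ of $G$ (or of the affine hypersurface $\{G=c\}$ together with its points at infinity, when $c\ne 0$). Matching this against the target exponent $(3r+1)/4-h_{r,c}$ reduces the proof to establishing, in each of the four cases,
\[
s\;\le\;\frac{r-1}{2}-2h_{r,c},
\]
i.e.\ the integer bounds $(r-1)/2$, $(r-2)/2$, $(r-3)/2$, $(r-4)/2$ respectively.

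\textbf{Main obstacle.} The technical heart of the argument is the singular-locus analysis. Explicitly,
\[
G^{\sharp}(x)\;=\;\sum_{i=1}^{r-1}\tilde y_i(x)\,\tilde y_{i+1}(x),
\]
where $\tilde y_i$ is the $i$-th coordinate, in the basis $\cB$, of $a_d\xi^d$, and $a_d$ is the leading coefficient of $f$. Thus $G^{\sharp}$ is the pull-back of the tridiagonal path quadratic form $Q(u)=\sum_{i=1}^{r-1}u_iu_{i+1}$ under the polynomial map induced by $\xi\mapsto a_d\xi^d$. Now $Q$ has rank $r$ when $r$ is even (so the projective quadric $\{Q=0\}$ is smooth) but only rank $r-1$ when $r$ is odd (a single singular point in $\PP^{r-1}$), and this dichotomy is the expected source of the parity split in $h_{r,c}$. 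The additional saving when $c\ne 0$ should come from replacing the projective closure $\{G^{\sharp}=0\}$ by the affine hypersurface $\{G=c\}$: a nonzero right-hand side eliminates a codimension-one stratum of singularities at infinity. Converting these heuristics into the rigorous dimension bounds $s\le (r-1)/2-2h_{r,c}$ — controlling the ramification locus of the scaled $d$-th power map, possibly under a mild hypothesis such as $\gcd(d,p)=1$, and handling each of the four cases — is the substantive step. Once it is done, Hooley--Katz together with summation over $t\in\F_p\setminus\{0\}$ finishes the proof.
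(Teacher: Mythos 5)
Your reduction in Steps 1--3 has a structural flaw that prevents it from ever producing the $c$-dependent exponents. The complete sum $S(t)=\sum_{x\in\F_p^r}\psi(tG(x))$ does not involve $c$ at all: $c$ enters only through the unimodular factor $\psi(-tc)$. Hence any bound on $|S(t)|$ obtained from the singular locus of the leading form $G^{\sharp}$ --- which is likewise independent of $c$ --- yields the same estimate for every $c$, so at best you recover the $c=0$ exponent in all four cases. Concretely, your target $s\le(r-1)/2-2h_{r,c}$ asks the projective singular locus of $G^{\sharp}$ to have dimension at most $r/2-2$ (for $r$ even, $c\ne0$) resp.\ $(r-3)/2$ (for $r$ odd, $c\ne0$), whereas what is provable (and needed already for the $c=0$ cases) is only $r/2-1$ resp.\ $(r-1)/2$; the paper's final remarks show the singular locus of the leading form is genuinely positive-dimensional in general (always, for odd $r$, and for suitable bases when $r\ge 4$ is even), so the stronger bounds for $c\ne 0$ are simply not available from the leading form alone. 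Your parenthetical escape hatch --- replacing the projective closure by the affine hypersurface $\{G=c\}$ ``together with its points at infinity'' --- is not coherent inside the character-sum framework: there is no hypersurface $\{G=c\}$ anywhere in the analysis of $S(t)$.

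The paper sidesteps this by using no characters at all: $|\cR(c,f)|$ is exactly the number of $\F_p$-points of the affine hypersurface $F-c=0$ (your $G$), and the \emph{affine} Hooley--Katz theorem gives $|N-p^{r-1}|\le C_{D,r}p^{(r+s)/2}$ with $s=\max\{\dim\cL(F-c),\dim\cL(F_{2d})-1\}$. The singular locus $\cL(F-c)$ is cut out by the equation $F=c$ \emph{together with} the vanishing of all partials, so its dimension genuinely drops when $c\ne0$; that is the sole source of $h_{r,c}$. Your identification of the leading form as the pullback of the path quadratic form $\sum u_iu_{i+1}$ under the $d$-th power map, and the rank-$r$ versus rank-$(r-1)$ parity dichotomy, are correct and do match the paper's treatment of $\cL(Q_{2d})$; but the substantive work --- the paper's Lemmas~2 and~3, carried out by the substitution $Z_j=f_j(Y_j)$ and linear algebra over the dual basis, separately for $\cL(F-c)$ and for $\cL(F_{2d})$ --- is only announced, not executed, in your write-up, and for $c\ne 0$ it must be performed on the affine object $\cL(F-c)$, which your setup has discarded.
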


In particular, we have for fixed $d$,
$$\lim_{p\rightarrow\infty}\frac{|\cR(c,f)|}{p^{r-1}}=1
\quad \mbox{ for }c\ne 0 \mbox{ and }r\ge 4 \mbox{ or }c=0 \mbox{ and }r\ge 6.$$

For $d=1$, or more generally, for any permutation polynomial $f(X)$ of $\F_q$,
it is easy to see that
\begin{equation*}
|\cR(c,f)|=\left\{\begin{array}{ll}p^{r-1}-p^{\lfloor (r-1)/2\rfloor}, & c\ne 0,\\
p^{r-1}+p^{\lfloor(r+1)/2 \rfloor}-p^{\lfloor(r-1)/2 \rfloor}, & c=0,\end{array}\right. \quad r\ge 2.
\end{equation*}
For the convenience of the reader we will provide a very short proof in Section~\ref{d1}.
Hence, it remains to prove Theorem~\ref{thm:RS} for $d\ge 2$.

A commonly used idea, for example in \cite{DMS15}, to estimate the number of solutions of certain equations over finite fields  
is to apply the Weil bound. In some special situations the Deligne bound \cite[Th\'eor\`eme 8.4]{De74} provides stronger results. 
The Weil bound has the only 
condition~$d\ge 1$ but is too weak for our purpose.
The Deligne bound needs some more intricate
technical conditions which are not satisfied in our situation, see Section~\ref{sec:remarks}.
Our main tool is 
a generalization of Deligne's Theorem for projective surfaces~\cite{De74}, the Hooley-Katz Theorem \cite{H91}, see Lemma~\ref{HK} in Section~\ref{hoka} below.
The crucial steps in the proof are:
\begin{enumerate}
    \item Identify $R(f(X))$ with a multivariate polynomial of the form 
    $$Q(Y_0,\ldots,Y_{r-1})=\sum_{j,k=0}^{r-1}a_{j,k}f_j(Y_j)f_k(Y_k),$$
    which is done in Section~\ref{identity}. Note that this polynomial has coefficients in~$\F_q$.
    \item Estimate the dimensions of the singular loci, defined in Section~\ref{hoka} below, of $Q-c$ and its
homogeneous part of largest degree, see Lemma~\ref{lemma:main} below.
\item We complete the proof in Section~\ref{identity}. After a linear variable substitution, $Q$ is transformed 
to a polynomial $F$ of the same degree as $Q$ but with coefficients in $\F_p$. In particular, the dimensions of the singular loci are invariant under this linear transformation. Then we
     apply the Hooley-Katz Theorem to $F-c$.
\end{enumerate}

\section{The case of permutation polynomials}
\label{d1}

For a permutation polynomial $f(X)$ of $\F_q$, $|\cR(c,f)|$ is the number $N_r(c)$ of solutions $(x_1,\ldots,x_r)\in \F_p^r$ of the equation 
$$x_1x_2+\ldots+x_{r-1}x_r=c.$$
We have
\begin{equation*}
N_r(c)=\left\{\begin{array}{ll}p^{r-1}-p^{\lfloor (r-1)/2\rfloor}, & c\ne 0,\\
p^{r-1}+p^{\lfloor(r+1)/2 \rfloor}-p^{\lfloor(r-1)/2 \rfloor}, & c=0,\end{array}\right. \quad r\ge 2,
\end{equation*}
which can be easily verified using the recursion
$$N_r(c)=pN_{r-2}(c)+(p-1)p^{r-2},\quad r\ge 4.$$
This recursion is obtained by distinguishing the cases $x_{r-1}=0$ and $x_{r-1}\ne 0$.

\section{The Hooley-Katz Theorem}\label{hoka}

We denote by $\overline{\F_p}$ the algebraic closure of $\F_p$.

The {\it (affine) singular locus} $\cL(F)$ of a polynomial $F$ over $\F_p$ in $r$ variables
is the set of common zeros in $\overline{\F_p}^r$ of the polynomials 
$$F,\frac{\partial F}{\partial X_1},\ldots,\frac{\partial F}{\partial X_r}.$$

Our main tool is the following result, see \cite[Theorem~7.1.14]{HFF}, which is the affine version of the Hooley-Katz Theorem~\cite{H91}.

\begin{lemma}[Hooley-Katz]
\label{HK}
Let $F$ be a polynomial over $\F_p$ in $r$ variables of degree~$D\ge 1$ such that the dimensions of the singular loci of $F$ and its homogeneous part $F_D$ of degree $D$ satisfy
$$
\max\{\dim(\cL(F)),\dim(\cL(F_D))-1\}\le s.
$$ 
Then  the number $N$ of zeros of~$F$ in $\F_p^r$ satisfies
$$\left|N-p^{r-1}\right|\le C_{D,r}p^{(r+s)/2},$$
where $C_{D,r}$ is a constant depending only on $D$ and $r$.
\end{lemma}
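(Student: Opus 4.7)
The plan is to reduce the affine statement to the projective form of the Hooley--Katz theorem via a standard homogenization. Introduce a new variable $X_0$ and set
$$\tilde F(X_0,X_1,\ldots,X_r)=\sum_{k=0}^{D}X_0^{D-k}F_k(X_1,\ldots,X_r),$$
where $F_k$ denotes the degree-$k$ homogeneous component of $F$. Stratifying the projective hypersurface $V(\tilde F)\subset\PP^r$ along the hyperplane at infinity $\{X_0=0\}\cong\PP^{r-1}$ yields
$$\#V(\tilde F)(\F_p)=N+\#V(F_D)(\F_p),$$
since the chart $\{X_0=1\}$ is in bijection with the affine $\F_p$-zeros of $F$, while on $\{X_0=0\}$ the form $\tilde F$ restricts to $F_D$ as a form in $r$ variables.

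Next, invoke the projective Hooley--Katz theorem: for a form $G$ of degree $D$ in $n+1$ variables whose singular locus in $\PP^n$ has projective dimension $\le\sigma$, one has
$$\bigl|\#V(G)(\F_p)-\#\PP^{n-1}(\F_p)\bigr|\le C'_{D,n}\,p^{(n+\sigma)/2}.$$
Applying this to $\tilde F$ in $\PP^r$ with projective singular dimension $\sigma_1$, and to $F_D$ in $\PP^{r-1}$ with projective singular dimension $\sigma_2$, then subtracting and using $\#\PP^{r-1}(\F_p)-\#\PP^{r-2}(\F_p)=p^{r-1}$, yields
$$\bigl|N-p^{r-1}\bigr|\le C''_{D,r}\bigl(p^{(r+\sigma_1)/2}+p^{(r-1+\sigma_2)/2}\bigr).$$
It therefore suffices to prove $\sigma_1,\sigma_2\le s$.

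The translation from the affine singular loci in the hypothesis to projective ones is essentially bookkeeping. For $F_D$, the affine cone of its projective singular locus in $\PP^{r-1}$ is precisely $\cL(F_D)$, so $\sigma_2=\dim\cL(F_D)-1\le s$ by hypothesis. For $\tilde F$, split the projective singular locus $\Sigma\subset\PP^r$ along $X_0$. Dehomogenizing at $X_0=1$ identifies $\Sigma\cap\{X_0\ne0\}$ with $\cL(F)$, because Euler's relation $D\tilde F=\sum_{i=0}^{r}X_i\,\partial\tilde F/\partial X_i$ makes $\partial\tilde F/\partial X_0$ vanish automatically once $\tilde F$ and the $\partial\tilde F/\partial X_i$ for $i\ge 1$ do; this part has projective dimension $\le\dim\cL(F)\le s$. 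On $\{X_0=0\}$, one computes $\partial\tilde F/\partial X_i|_{X_0=0}=\partial F_D/\partial X_i$ for $i\ge 1$ and $\partial\tilde F/\partial X_0|_{X_0=0}=F_{D-1}$, so $\Sigma\cap\{X_0=0\}$ sits inside the projective singular locus of $F_D$ and therefore has dimension $\le\sigma_2\le s$. Combining, $\sigma_1\le s$.

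The real weight of the lemma lies in the projective Hooley--Katz theorem itself, which is a deep consequence of Deligne's étale-cohomology estimates and a Lefschetz-type analysis of the vanishing cycles carried by the singular hypersurface. I would treat it as a black box, citing \cite{H91}; once it is available, the homogenization and dimension count above convert it into the affine version stated in the lemma with a suitable constant $C_{D,r}$ absorbing the lower-order term $p^{(r-1+s)/2}$.
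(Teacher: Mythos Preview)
The paper does not prove this lemma at all: it quotes it verbatim from \cite[Theorem~7.1.14]{HFF} and \cite{H91}, adding only the remark that the stated bound on $\dim(\cL(F_D))$ is the affine dimension, which differs by~$1$ from the projective dimension used in the cited reference. So there is nothing in the paper to compare your argument against beyond a citation.

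That said, your homogenization argument is the standard and correct way to pass from the projective Hooley--Katz theorem of \cite{H91} to the affine form stated here, and it is carried out cleanly. The stratification $\#V(\tilde F)(\F_p)=N+\#V(F_D)(\F_p)$ is right, the subtraction of the two projective estimates is right, and your control of the singular locus of $\tilde F$ is correct in all characteristics: the Euler identity step works because you include the equation $\tilde F=0$ (not just the partials), so you do not need $p\nmid D$. The computation $\partial_0\tilde F|_{X_0=0}=F_{D-1}$ is also correct and gives the containment $\Sigma\cap\{X_0=0\}\subseteq \cL(F_D)$ in $\PP^{r-1}$. One cosmetic point: when you write ``this part has projective dimension $\le\dim\cL(F)$'', you mean that the quasi-projective piece $\Sigma\cap\{X_0\ne0\}$ is isomorphic (via the affine chart) to $\cL(F)$ and hence has the same dimension; the phrase ``projective dimension of $\cL(F)$'' is a slight abuse. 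Otherwise the reduction is sound, and treating the projective statement from \cite{H91} as a black box is exactly what the paper does as well.
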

We remark, that in the statement $\dim(\cL(F_D))$ denotes the  dimension of the \emph{affine} singular locus of the homogeneous polynomial $F_D$ while in \cite[Theorem~7.1.14]{HFF} the dimension of the \emph{projective} singular locus is considered. The difference of these dimensions is $1$.

\section{Proof of Theorem~\ref{thm:RS}}\label{identity}

First, we express the Rudin-Shapiro function $R(\xi)$ of $\F_q$ in terms of the trace and the dual basis.

Let $\varphi$ be the \emph{Frobenius automorphism} defined by 
$$\varphi(\xi)=\xi^p \quad \mbox{for }\xi\in\F_q.$$ 
We extend $\varphi$ to the polynomial ring $\F_q[X_1,\dots, X_{r}]$ by 
$$\varphi(X_i)=X_i,\quad i=1,\ldots,r.$$
Let
$$
\Tr( \xi)=\xi+\varphi(\xi)+\cdots +\varphi^{{r-1}}(\xi)\in\F_p
$$ 
denote the (absolute) \emph{trace} of $\xi\in \F_q$. 
Let $(\delta_1,\ldots,\delta_r)$ denote the (existent and unique) \emph{dual basis} of the basis~$\cB =( \beta_1,\ldots ,\beta_r)$ of $\F_q$, see for example \cite{LN97}, 
that is,  
 \begin{equation}\label{dual}
\Tr (\delta_i\beta_j)=
\begin{cases}
1 & {\rm if}\ i=j,\\
0 & {\rm if}\ i\not =j, 
\end{cases}
\quad 1\le i,j\le r.
\end{equation}
Then we have 
$$\Tr (\delta_i\xi)=x_i\quad\mbox{for any}\quad \xi=\sum_{j=1}^r x_j \beta_j\in\F_q\quad \mbox{with }x_j\in \F_p.
$$
For $f(X)\in \F_q[X]$ we obtain that
\begin{equation*}
\begin{split}
R(f(\xi))&=\sum_{i=1}^{r-1}\Tr(\delta_if(\xi))\Tr(\delta_{i+1}f(\xi))\\
&=\sum_{i=1}^{r-1}
\sum_{j,k=0}^{r-1} \varphi^j(\delta_i)\varphi^k(\delta_{i+1}) \varphi^j(f(\xi)) \varphi^k(f(\xi)).
\end{split}
\end{equation*}
Write
\begin{multline}\label{eq:F}
F(X_1,\dots, X_{r})\\
=\sum_{j,k=0}^{r-1} a_{j,k} f_j(\beta_1^{p^j}X_1+\dots+ \beta_{r}^{p^j}X_{r} )
 f_k(\beta_1^{p^k}X_1+\dots+ \beta_{r}^{p^k}X_{r} ),
\end{multline}
where
\begin{equation}\label{def:ajk}
a_{j,k}=\sum_{i=1}^{r-1}\varphi^j(\delta_i)\varphi^k(\delta_{i+1}),\quad j,k=0,\ldots,r-1,
\end{equation}
and
$f_j=\varphi^j(f)\in\F_q[X]$. 
Verify $\varphi(F)=F$, that is, $F\in\F_p[X_1,\dots, X_{r}]$ and 
$$
R(f(\xi))=F(x_1,\dots, x_{r})\quad \text{for} \quad \xi=\sum_{i=1}^{r}x_i\beta_i, \quad x_i\in\F_p.
$$

Theorem~\ref{thm:RS} follows from Lemma~\ref{HK}
and the following lemma which we prove in the next section.

\begin{lemma}\label{lemma:main}
Let $f(X)\in \F_q[X]$ be of degree $d$ with $2\le d <p$
and $F\in\F_p[X_1,\dots, X_{r}]$ be defined by \eqref{eq:F}. Then $F$ has degree $2d$. Moreover, for any~$c\in \F_p$ we have
$$
\dim(\cL(F-c))\le \left\{\begin{array}{ll} r/2-1, & r \mbox{ even and }c\ne 0,\\
(r-1)/2, & r \mbox{ odd and }c\ne 0,\\
r/2, & r \mbox { even and }c=0,\\
(r+1)/2, &  r\mbox{ odd and }c=0.\end{array}\right.
$$
Furthermore, if $F_{2d}\in \F_p[X_1,\dots, X_{r}]$ is the homogeneous part of $F$ of degree~$2d$, then
$$
\dim(\cL(F_{2d}))\le 
\left\{
\begin{array}{ll} 
r/2, & r \mbox { even},\\
(r+1)/2, &  r\mbox{ odd}.
\end{array}\right.
$$
\end{lemma}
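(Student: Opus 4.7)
The plan is to reduce the problem to a symmetric-quadratic-form problem in separated variables $Y_0,\dots,Y_{r-1}$, stratify the gradient locus by the vanishing pattern of $f_j'(Y_j)$, identify the matrix of the form (after a change of basis) with the adjacency matrix of the path graph on $r$ vertices, and then obtain the sharper bound for $c\ne 0$ via a Lagrangian argument.

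First I would perform the invertible $\F_q$-linear substitution $Y_j=\beta_1^{p^j}X_1+\cdots+\beta_r^{p^j}X_r$ for $j=0,\dots,r-1$; its coefficient matrix is the Moore matrix of the basis $(\beta_1,\dots,\beta_r)$ and is invertible because the $\beta_i$ are $\F_p$-linearly independent. This turns $F(X)$ into $Q(Y)=\sum_{j,k=0}^{r-1}a_{j,k}f_j(Y_j)f_k(Y_k)$, and since linear substitutions preserve the dimensions of affine singular loci it suffices to prove the bounds for $\cL(Q-c)$ and $\cL(Q_{2d})$. The degree claim reduces to $(a_{j,k})\ne 0$, which follows from \eqref{def:ajk} by writing $(a_{j,k})=UV^\top$ where $U$ and $V$ are the two $r\times(r-1)$ submatrices of the invertible Moore matrix $(\delta_i^{p^j})$ obtained by dropping the last or the first column, each of rank $r-1$.

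Next I set $W_j=f_j(Y_j)$ and $B=(a_{j,k}+a_{k,j})_{j,k}$, so that $Q=\tfrac12 W^\top BW$ and $\partial Q/\partial Y_j=f_j'(Y_j)(BW)_j$. The gradient vanishes iff, for each $j$, either $f_j'(Y_j)=0$ (which places $Y_j$ in a finite set, using $d<p$) or $(BW)_j=0$. I stratify by $S=\{j:f_j'(Y_j)=0\}$ and $T=\{0,\dots,r-1\}\setminus S$: on each stratum $W_S$ is fixed in a finite set and $W_T$ solves the linear system $B_TW_T=-B_{TS}W_S$, so the stratum has dimension at most $\mathrm{corank}(B_T)$, with $B_T=(b_{j,k})_{j,k\in T}$ the principal submatrix. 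The same stratification (now with $W_j=c_jY_j^d$ and $S=\{j:Y_j=0\}$) handles $Q_{2d}$. The change of $\F_q$-coordinates $v_i=\sum_j u_j\delta_i^{p^j}$ identifies $B$ with the $r\times r$ path-graph adjacency matrix $M$ (the tridiagonal $0/1$ matrix with $1$'s only on the super- and sub-diagonal); the Chebyshev-type recurrence $\det M_r=-\det M_{r-2}$ yields $\mathrm{rank}(M)=r$ for $r$ even and $r-1$ for $r$ odd, in every characteristic. Combining this with the general estimates $\mathrm{rank}(B_T)\ge\mathrm{rank}(B)-2|S|$ and $\mathrm{corank}(B_T)\le|T|$ gives $\mathrm{corank}(B_T)\le\min\{|T|,|S|+\eta\}$ with $\eta\in\{0,1\}$ according to parity, and its maximum over $S$ is $r/2$ (resp.\ $(r+1)/2$), which matches the bounds claimed for $\cL(Q_{2d})$ and for $\cL(Q)$ in the case $c=0$.

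The main obstacle is the sharper bound for $c\ne 0$, and the plan here has two ingredients. Using symmetry of $B_T$ together with the identity $\mathrm{image}(B_T)=(\ker B_T)^\perp$, one first shows that $Q$ is constant on each affine $W_T$-component of a stratum, equal to $\tfrac12\bigl(W_S^\top B_{SS}W_S-(W_T^0)^\top B_TW_T^0\bigr)$ for any particular solution $W_T^0$. Second, the top corank is attained only when $B_T=0$, equivalently when the $|T|$-dimensional subspace $V_T\subset\F_q^r$ spanned by the columns of the Moore matrix indexed by $T$ is a maximal totally isotropic subspace for $M$; then $V_T^{\perp_M}=V_T$. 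A short Lagrangian argument now shows that any $W_S\in\ker B_{TS}$ corresponds to an element of $V_S$ lying in $V_T^{\perp_M}\cap V_S=V_T\cap V_S$, and this intersection is $\{0\}$ because $V_T$ and $V_S$ together span $\F_q^r$ while $\dim V_T+\dim V_S=r$. Hence $W_S=0$ on the top stratum, which forces $Q=0$ there; the condition $c\ne 0$ therefore excludes the top stratum and leaves only strata of $\mathrm{corank}(B_T)\le r/2-1$ (resp.\ $(r-1)/2$), which is the claimed improvement.
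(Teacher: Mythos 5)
Your proposal follows the same skeleton as the paper's proof: pass to $Q(Y_0,\dots,Y_{r-1})=\sum a_{j,k}f_j(Y_j)f_k(Y_k)$ via the Moore-matrix substitution, factor each partial derivative as $f_\ell'(Y_\ell)\cdot(\text{linear form in the }f_k(Y_k))$, stratify accordingly, and reduce the linear algebra to the path-graph matrix $M$ (your identity $B=D^{\top}MD$ is exactly the paper's computation leading to its system \eqref{eq:LER}, and your rank count $\mathrm{rank}(M)=r$ or $r-1$ matches its $\dim(\widetilde W_{\{0,\dots,r-1\}})=0$ or $1$). Where you genuinely diverge is the $c\ne 0$ refinement: the paper restricts the quadratic form to the solution space $\widetilde W_L$ of the linear equations and argues that either the restricted form vanishes identically (whence no points with $Q=c\ne0$) or the dimension drops by one, uniformly over all strata; you instead observe that only the single extremal stratum ($B_T=0$ with $|T|=(r+\eta)/2$) can reach the critical dimension and kill it by a Lagrangian argument ($V_T^{\perp_M}=V_T$, $V_T\cap V_S=\{0\}$, hence $W_S=0$ and $Q=0$ there). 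Both work; the paper's dichotomy is shorter and needs no isotropic-subspace geometry, while yours gives a cleaner structural explanation of exactly which strata are dangerous (for odd $r$ your argument does rely on the fact that a totally isotropic subspace of dimension $(r+1)/2$ must contain the radical of $M$, which you should spell out). The one step you assert without justification is that the stratum in the $Y$-variables has dimension at most $\mathrm{corank}(B_T)$, which is a statement about the $W$-variables: you need that the substitution $W_j=f_j(Y_j)$ does not increase dimension. This is true because each $f_j$ is non-constant (so the map is finite), and it is precisely the content of the paper's inequality $\dim(V_L)\le\dim(W_L)$ proved via \cite[Corollary~9.5.4]{CLO15}; it should be stated explicitly. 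Finally, your reduction of the degree claim to ``$(a_{j,k})\ne 0$'' is slightly off, since the coefficient of $Y_j^dY_k^d$ is $a_{j,k}+a_{k,j}$; what you actually need is $B\ne 0$, which your identification $B\cong M$ supplies anyway.
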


\section{Proof of Lemma~\ref{lemma:main}}
Consider the linear transformation on $\overline{\F_p}^r$ 
\begin{equation*}
y_i=\sum_{j=1}^r\beta_j^{p^i}x_j,\quad i=0,\ldots,r-1.
\end{equation*}
It is invertible with inverse 
\begin{equation}\label{transdelta}
x_k=\sum_{i=0}^{r-1}\delta_k^{p^i}y_i,\quad k=1,\ldots,r,
\end{equation}
by \eqref{dual}.

Then we denote by $Q$ the polynomial obtained from $F$, defined by \eqref{eq:F}, with the corresponding variable transformation,
\begin{equation*}
F(X_1,\dots, X_{r})=\sum_{j,k=0}^{r-1} a_{j,k} f_j(Y_j)f_k(Y_k)\\
=Q(Y_0,\dots, Y_{r-1}),
\end{equation*}
where 
\begin{equation}\label{eq:coord}
Y_{i}=\sum_{j=1}^{r}\beta_j^{p^i}X_j, \quad i=0,\dots, r-1.
\end{equation}
As the degree and the dimension, see \cite[Corollary~9.5.3]{CLO15}, of singular loci are invariant under the regular transformation~\eqref{eq:coord}, it is enough to show the results for the polynomial $Q$.

We may assume that $f(X)$ is monic since otherwise we multiply the basis $\cB$ element-wise with the leading coefficient of $f(X)$.
The degree $2d$ homogeneous part of $Q$ is
$$
Q_{2d}(Y_0,\ldots,Y_{r-1})=\sum_{j,k=0}^{r-1} a_{j,k} Y_j^dY_k^d.
$$

By the definition \eqref{def:ajk} of $a_{j,k}$ we have $$\sum_{j=0}^{r-1}a_{j,0}\beta_1^{p^j}=\sum_{i=1}^{r-1}\delta_{i+1}\Tr(\beta_1\delta_i)=\delta_2\ne 0.$$ Hence, $a_{j,0}\ne 0$ for some $j$. 
Since $Y_j^dY_k^d$, $0\leq j,k<r$, are linearly independent over $\F_q$, we get that $Q_{2d}$ is not the zero polynomial.
In particular we have
$$\deg(F)=\deg(Q)=\deg(Q_{2d})=2d.$$

We estimate the dimension of the singular locus $\mathcal{L}(Q-c)$. 
The bound for the dimension of $\mathcal{L}(Q_{2d})$ corresponds to the special case $f(X)=X^d$ and $c=0$,
that is, $Q=Q_{2d}$ in this case.

To estimate $\dim(\mathcal{L}(Q-c))$, consider the partial derivatives
$$
\frac{\partial (Q-c)}{\partial Y_\ell}(Y_0,\ldots,Y_{r-1})
=f_\ell'(Y_\ell)\sum_{k=0}^{r-1} (a_{k,\ell}+a_{\ell,k})f_k(Y_k), \quad  \ell=0,\dots,r-1.
$$
The condition $2\le d<p$ implies that $f'(X)=f_0'(X)$ is not constant and so~$f_\ell'(X)$ is not constant for $\ell=0,\ldots,r-1$.

Note that
$$\cL(Q-c)=\bigcup_{L\subseteq\{0,\ldots,r-1\}}(V_L\cap C_L ),$$ 
where $V_L$ is the (affine) variety in $\overline{\F_p}^r$ of solutions of the system of equations
\begin{equation}\label{eq:def_VL}
\begin{split}
Q(Y_0,\ldots,Y_{r-1})&=c,\\
\sum_{k=0}^{r-1} (a_{k,\ell}+a_{\ell,k})f_k(Y_k)&=0,\quad \ell \in L,
\end{split}
\end{equation}
and $C_L$ the variety of solutions of
\begin{align*} 
Q(Y_0,\ldots,Y_{r-1})&=c,\\
f_\ell'(Y_\ell)&=0,\quad \ell\in \{0,1,\ldots,r-1\}\setminus L.
\end{align*}
Hence, 
\begin{equation}\label{dimcup} \dim(\cL(Q-c))\le \max\{ \min\{\dim(V_L),\dim(C_L)\}: L\subseteq \{0,\ldots,r-1\}\},
\end{equation}
since 
$$\dim(U\cup V)=\max\{\dim(U),\dim(V)\}$$
and 
$$\dim(U\cap V)\le\min\{\dim(U),\dim(V)\},$$
see for example \cite[Propositions 9.4.8 and 9.4.1]{CLO15}.

It remains to estimate the dimensions of $V_L$ and $C_L$.

\begin{lemma}\label{mainlemma}
For $L\subseteq\{0,1,\ldots,r-1\}$ the (affine) variety $V_L$ 
is of dimension at most 
\vn{$$
\left\{\begin{array}{ll}
 r-|L|-1, & r \mbox{ even and }c\ne 0, \\
        r-|L|, & r \mbox{ even and }c= 0 \mbox{ or }
        r \mbox{ odd and }c\ne0,\\
        r-|L|+1, & r \mbox{ odd and } c=0.
\end{array}
\right.$$}
\end{lemma}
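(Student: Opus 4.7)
My plan is to apply two changes of coordinates that bring $V_L$ into a form where the dimension count reduces to intersecting a quadric with a linear subspace. First, the map $\phi \colon (Y_0, \dots, Y_{r-1}) \mapsto (f_0(Y_0), \dots, f_{r-1}(Y_{r-1}))$ is a finite surjective morphism of $\overline{\F_p}^{\,r}$, so $\dim V_L$ equals the dimension of its image $W_L \subseteq \overline{\F_p}^{\,r}$ in the coordinates $Z_k = f_k(Y_k)$; here $W_L$ is defined by $Z^T A Z = c$ together with $((A + A^T) Z)_\ell = 0$ for $\ell \in L$, where $A = (a_{j,k})$. Writing $A = M_1^T M_2$ with $M_1$ (resp.\ $M_2$) the submatrix of rows $1, \dots, r-1$ (resp.\ $2, \dots, r$) of the Moore matrix $N = (\delta_i^{p^j})_{i,j}$, a direct computation gives
$$A + A^T = N^T B^* N,$$
where $B^*$ is the tridiagonal $0/1$ matrix that is the adjacency matrix of the path graph $P_r$. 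Since $N$ is invertible, the substitution $w = N Z$ transforms the quadratic form into $Q^*(w) = \sum_{i=1}^{r-1} w_i w_{i+1}$, and setting $u := B^* w$ turns each condition $((A + A^T) Z)_\ell = 0$ into $c_\ell \cdot u = 0$, where $c_\ell$ is the $\ell$-th column of $N$. Since the columns of $N$ are linearly independent, these define a linear subspace $H \subseteq \overline{\F_p}^{\,r}$ of dimension exactly $r - |L|$.

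I would then invoke the classical fact that $\operatorname{rank}(B^*) = r$ for $r$ even and $\operatorname{rank}(B^*) = r - 1$ for $r$ odd, the latter with $\ker B^*$ spanned by $v^* = (1, 0, -1, 0, 1, 0, \dots)$. When $r$ is even, $w \leftrightarrow u$ is bijective and the variety becomes $\{u \in H : u^T (B^*)^{-1} u = 2c\}$; this quadric either cuts $H$ in a hypersurface of dimension $r - |L| - 1$, or is identically zero on $H$ (giving all of $H$ if $c = 0$ and the empty set if $c \neq 0$), yielding the two even cases.

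When $r$ is odd the map $w \mapsto u = B^* w$ has $1$-dimensional fibers and image $(v^*)^\perp$, so $u$ is constrained to $H' := H \cap (v^*)^\perp$, which has dimension $r - |L| - 1$ or $r - |L|$. The identity $Q^*(w) = \tfrac12 w^T u$ together with $(v^*)^T u = 0$ for $u \in H'$ shows that $Q^*$ descends to a well-defined quadratic form on $H'$, so the preimage under $w \mapsto u$ of any $u$ with $Q^*(u) = c$ contributes exactly one fiber dimension. Imposing $Q^*(w) = c$ then either cuts a hypersurface in $H'$ or is vacuous (the latter only when $c = 0$), producing the bounds $r - |L|$ for $c \neq 0$ and $r - |L| + 1$ for $c = 0$. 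The main technical step is the identity $A + A^T = N^T B^* N$; once this is in place the dimension bookkeeping, and in particular the descent of $Q^*$ through the kernel of $B^*$ in the odd case, is routine.
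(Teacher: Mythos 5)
Your proof is correct and follows essentially the same route as the paper: pass to the image variety $W_L$ under $Z_k=f_k(Y_k)$, recognize the quadratic form as the path form $\sum_i w_iw_{i+1}$ via the Moore matrix of the dual basis (your identity $A+A^T=N^TB^*N$ is exactly the paper's computation of $\sum_{\ell}\beta_m^{p^\ell}(a_{\ell,k}+a_{k,\ell})$ written in matrix form, and the rank of $B^*$ is the paper's case analysis of the system \eqref{eq:LER}), and then lose one dimension to the quadric unless its restriction to the linear part vanishes identically. The only cosmetic differences are that the paper bounds $\dim V_L\le\dim W_L$ by an algebraic-dependence argument (CLO, Corollary 9.5.4) rather than by finiteness of $\phi$, and treats the odd-$r$ kernel of $B^*$ by explicit parametrization of $\widetilde W_L$ rather than by your descent of $Q^*$ through $\ker B^*$.
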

\begin{proof}
\vn{
For
any  $L\subset \{ 0,\dots ,r-1\}$ we consider the variety $W_L$ obtained by replacing  $Z_j=f(Y_j)$ for $j=0,\ldots,r-1$ in the defining equations \eqref{eq:def_VL} of $V_L$.
The variety $W_L$ 
is 
 the set of solutions $(\zeta_0,\ldots,\zeta_{r-1})\in\overline{\F_p}^r$ of the system
 \begin{align}\label{ZjZk}
\sum_{j,k=0}^{r-1}a_{j,k}Z_jZ_k&=c\\ \nonumber
\sum_{k=0}^{r-1} (a_{k,\ell}+a_{\ell,k})Z_k&=0,\quad \ell \in L.
\end{align}

First we show 
\begin{equation}\label{VLWL}
\dim(V_L)\le\dim(W_L).
\end{equation}
Put $s=\dim(W_L)$. Since otherwise \eqref{VLWL} is trivial we may assume $s<r$.
By \cite[Corollary 9.5.4]{CLO15}
for all $\{i_1,\ldots,i_{s+1}\}\subseteq\{0,\ldots ,r-1\}$, there exists a nonzero polynomial $P$ in $s+1$ variables with coefficients in $\overline{\F_p}$ such that 
$$
  P(\zeta_{i_1},\ldots , \zeta_{i_{s +1}})=0\quad \mbox{for all }(\zeta_0,\ldots ,\zeta_{r-1})\in W_L
$$ 
and thus  
$$
P(f_{i_1}(\eta_{i_1}),\ldots ,f_{i_{s +1}}(\eta_{i_{s+1}}))=0 \quad\mbox{for all }(\eta_0,\ldots ,\eta_{r-1})\in V_L.
$$
Since the polynomial $F(Y_{i_1},\ldots,Y_{i_{s+1}})=P(f_{i_1}(Y_{i_1}),\ldots,f_{i_{s+1}}(Y_{i_{s+1}}))$ is obviously not the zero polynomial
we deduce 
$\dim(V_L)\le s$
by \cite[Corollary~9.5.4]{CLO15}.
It remains to show
\begin{equation*}
\dim(W_L)\le 
\left\{\begin{array}{ll}
 r-|L|-1, & r \mbox{ even and }c\ne 0, \\
        r-|L|, & r \mbox{ even and }c= 0 \mbox{ or }r \mbox{ odd and }c\ne0,\\
        r-|L|+1, & r \mbox{ odd and } c=0.
\end{array}
\right.
\end{equation*}
Let $\widetilde W_L$ be the $\overline{\F_p}$-linear space of solutions of the system of linear equations 
$$\sum_{k=0}^{r-1} (a_{k,\ell}+a_{\ell,k})Z_k=0,\quad \ell \in L.$$

First we compute $\dim(\widetilde W_{\{0,1,\ldots,r-1\}})$, that is, we determine $(\zeta_0,\ldots,\zeta_{r-1})\in \overline{\F_p}^r$ satisfying 
$$
\sum_{k=0}^{r-1} (a_{\ell,k}+a_{k,\ell})\zeta_k=0,\quad \ell=0,\ldots,r-1,
$$}
and thus
$$
\sum_{k=0}^{r-1}\sum_{\ell=0}^{r-1}\beta_m^{p^\ell} (a_{\ell,k}+a_{k,\ell})\zeta_k=0,\quad m=1,\ldots,r.
$$
Since
\begin{align*}
\sum_{\ell=0}^{r-1}\beta_m^{p^\ell} (a_{\ell,k}+a_{k,\ell})
&= \sum_{i=1}^{r-1}\left(\delta_{i+1}^{p^k}\Tr(\beta_m\delta_i)+\delta_i^{p^k}\Tr(\beta_m\delta_{i+1})\right)\\
&=
\left\{\begin{array}{cc} \delta_2^{p^k}, & m=1,\\
\delta_{m-1}^{p^k}+\delta_{m+1}^{p^k}, & m=2,\ldots,r-1,\\
\delta_{r-1}^{p^k},& m=r,\end{array}\right.
\end{align*}
for $k=0,\ldots,r-1$,
we get 
\begin{equation}\label{eq:LER}
\begin{split}
\sum_{k=0}^{r-1}\delta_2^{p^k}\zeta_k &=0,\\
\sum_{k=0}^{r-1}\left(\delta_{m-1}^{p^k}+\delta_{m+1}^{p^k}\right)\zeta_k &=0,\quad m=2,\ldots,r-1,\\
\sum_{k=0}^{r-1}\delta_{r-1}^{p^k}\zeta_k&=0.
\end{split}
\end{equation}
For even $r$ this implies
$$
\sum_{k=0}^{r-1}\delta_m^{p^k}\zeta_k=0,\quad m=1,\ldots,r,
$$
and since the transformation $\eqref{transdelta}$ is regular
we get $\zeta_k=0$ for all $k$ and thus \vn{$\dim(\widetilde{W}_{\{0,\ldots,r-1\}})=0$}.

For odd $r$, \eqref{eq:LER} implies 
\begin{equation*}
\sum_{k=0}^{r-1}\delta_m^{p^k}\zeta_k=\left\{\begin{array}{ll} 0, & m\mbox{ even}, \\
(-1)^{(m-1)/2}\lambda, & m \mbox{ odd},\end{array}\right. \quad m=1,\ldots,r,
\end{equation*}
where $\sum_{k=0}^{r-1}\delta_1^{p^k}\zeta_k=\lambda$ for some $\lambda\in \overline{\F_p}$. We get 
\vn{$\dim(\widetilde{W}_{\{0,\ldots,r-1\}})=1$.

Now for any proper subset $L$ of $\{0,\ldots,r-1\}$ the variety is defined by deleting $j=r-|L|$ equations from the
definition of $\widetilde{W}_{\{0,\ldots,r-1\}}$. That is, its dimension is
increased by at most $r-|L|$.
The vector space $\widetilde{W}_L$
is  of dimension $t$ with $t\le r-|L|$ for even $r$ and $t\le \min \{ r, r-|L|+1\}$ for odd $r$. }

\vn{
Let $(u_1,\ldots ,u_{t})$ be basis of $\widetilde W_L$ so that each $z=(\zeta_0,\ldots ,\zeta_{r-1})\in \widetilde W_L$
is of the form $z=\sum_{i=1}^{t} \lambda_iu_i$ with $\lambda_1,\ldots ,\lambda_{t}\in \overline{\F _p}$.
If we write each $u_i=(\mu_{0,i},\ldots ,\mu_{r-1 ,i})$
for $i=1,\ldots ,t$,
then the coordinates of $z$ satisfy 
$$
\zeta_k=\lambda _1\mu_{k,1}+\cdots +\lambda _{t}\mu_{k,t} \quad \text{for } 
0\le k\le r-1.
$$
After the linear variable substitution
$$Z_k=\sum_{i=1}^t \mu_{ki} L_i$$
the  first equation of \eqref{ZjZk}
becomes
\begin{equation*}
\sum_{j,k=0}^{r-1}a_{j,k} \left(\sum_{i=1}^{t}  \mu_{j,i}L_i\right)\left(\sum_{i=1}^{t} \mu_{k,i}L_i\right)
=c.
\end{equation*}
The left hand side is a quadratic form in $L_1,\ldots ,L_{t}$.
If this form is identically zero, then $W_L=\emptyset$ if $c\not =0$ and $W_L=\widetilde W_L$ if $c=0$.
If the form is not identically zero, then
the variables $L_1,\ldots,L_t$ are algebraically dependent and we get
$\dim(W_L)\le\dim(\widetilde{W}_L) -1$
by \cite[Corollary 9.5.4]{CLO15}.
}\end{proof}

It is easy to see that 
\begin{equation}\label{CL} \dim(C_L)\le |L|
\end{equation}
\vn{by removing the equation $Q(Y_0,\ldots ,Y_{r-1})=0$ and having the same argument as in the proof of Lemma \ref{mainlemma},
this time substituting $Z_j= f'_j (Y_j)$ for $j=0,\ldots ,r-1$ and applying \cite[Corollary 9.5.4]{CLO15}.}

Combining \eqref{dimcup}, Lemma~\ref{mainlemma} and \eqref{CL} we get
$$\dim(\cL(Q-c))\le \left\{\begin{array}{ll} r/2-1, & r \mbox{ even and }c\ne 0,\\
(r-1)/2, & r \mbox{ odd and }c\ne 0,\\
r/2, & r \mbox { even and }c=0,\\
(r+1)/2, &  r\mbox{ odd and }c=0.\end{array}\right.$$


\section{Final remarks} \label{sec:remarks}
\subsection*{Some cases with singular locus $\cL(Q_{2d})$ of positive dimension}

Unfortunately, we cannot apply the Deligne bound to obtain a better result if the
singular locus $\cL(Q_{2d})$ has positive dimension.

It is clear from the proof of Lemma~\ref{mainlemma} that for odd $r$ the singular locus of~$Q_{2d}$ is of dimension at least $1$.
For some special choices of the dual basis, $\mathcal{L}(Q_{2d})$ has also positive dimension 
for any $r\ge 4$.

Namely, if  
\begin{equation}\label{sumdelt}\sum_{i=1}^{r-1}\delta_i\delta_{i+1}=0,
\end{equation}
the coefficients $a_{j,j}$, $j=0,\ldots,r-1$, defined by \eqref{def:ajk} vanish. Then each $(\eta_0,\ldots,\eta_{r-1})\in \overline{\F_p}^r$ with only one non-zero coordinate is 
a singular point of $Q_{2d}$.

Now we construct such a dual basis.
Let $\alpha$ be a defining element of $\F_q$ over $\F_p$, that is, $\F_q=\F_p(\alpha)$. 
Then, for sufficiently large $p$ with respect to $r$, $(\delta_1,\ldots,\delta_r)$ defined by 
\begin{align*}
\delta_{2i+1}&=\alpha^{r-1-i},\quad i=0,1,\ldots,\lfloor r/2\rfloor -1,\\
\delta_{2i+2}&=\alpha^i,\quad i=0,1,\ldots,\lfloor (r-1)/2\rfloor -1,\\
\delta_r&=-
\left\{
\begin{array}{ll}(r/2-1)(\alpha^{r/2-1}+\alpha^{r/2-2}),& r \mbox{ even},\\
\frac{r-1}{2} \alpha^{(r+1)/2}+\frac{r-3}{2} \alpha^{(r-1)/2}, & r \mbox{ odd},
\end{array}
\right. \quad r\ge 4,
\end{align*}
is a basis of $\F_q$ over $\F_p$, since $\alpha^{\lfloor (r-1)/2\rfloor}$ appears only in $\delta_r$, satisfying
\eqref{sumdelt}.

\subsection*{The Thue-Morse function of $\F_q$ for monomials}
The {\em Thue-Morse function} $T$ for $\F_q$ with respect to the basis ${\cal B}$ is 
$$T(\xi)=\sum_{i=1}^r x_i,\quad \xi=x_1\beta_1+\ldots+x_r\beta_r\in \F_q,$$
where $x_1,\ldots,x_r\in \F_p$.
For $f(X)\in \F_q[X]$ of degree $d\ge 1$ and $c\in\F_p$
we put 
$$
\cT (c,f)=\{\xi\in\F_q : T(f(\xi))=c\}.
$$
The first author and S\'ark\"ozy \cite[Theorem~1.2]{DS13} proved 
$$\left||\cT(c,f)|-p^{r-1}\right|\le (d-1)p^{r/2},\quad \gcd(d,p)=1.$$
For monomials $f(X)=X^d$, $c\ne 0$, fixed $d\ge 2$ and fixed $r$, the Hooley-Katz Theorem provides the improvement
$$\left||\cT(c,X^d)|-p^{r-1}\right|\le C_{d,r}p^{(r-1)/2},\quad c\ne 0.$$

In particular, we get
$$\lim_{p\rightarrow \infty}\frac{|\cT(c,X^d)|}{p^{r-1}}=1,\quad c\ne 0,$$
also for $r=2$.

The crucial step is to show that the singular locus of 
$$Q(Y_0,\ldots,Y_{r-1})=\sum_{\ell=0}^{r-1} \delta^{p^\ell}Y_\ell^d-c$$
is of dimension $-1$,
where we used the same notation as before and 
$$\delta=\sum_{i=1}^r\delta_i\ne 0,$$
since $\delta_1,\ldots,\delta_r$ are linearly independent.
Now the partial derivatives are
$$\frac{\partial Q}{\partial Y_\ell}=\delta^{p^\ell}d Y_\ell^{d-1},\quad \ell=0,\ldots,r-1.$$
We may assume $d<p$. Then the only common zero of all partial derivatives is~$(0,\ldots,0)$. However, $(0,\ldots,0)$  is not a zero of $Q$ for $c\ne 0$.

\medskip
The Hooley-Katz Theorem can also be applied for general $f(X)\in \F_q[X]$ of degree $d\ge 2$ but would give an improvement of \cite[Theorem~1.2]{DS13} only for $c\in\F_p\setminus\cC$ where ${\cC}$ is a subset  of $\F_p$ with at most
$(d-1)^r$ elements, where~$d$ and~$r$ are fixed and $p$ is sufficiently large. The polynomial $Q$ for a general $f$ becomes 
\begin{equation*}
    Q(Y_0,\ldots ,Y_{r-1})=\sum_{\ell =0}^{r-1}
    \delta^{{p^\ell}}
    f_\ell (Y_\ell)-c,
    \end{equation*}
with $f_\ell =\varphi ^\ell (f)$ as in Section \ref{identity}.

A singular point $(\eta_0, \ldots ,\eta_{r-1})\in \overline{\F_p}^r$ satisfies 
\begin{equation}\label{fprime} f'_\ell (\eta_\ell )=0\quad\mbox{for } \ell =0,\ldots ,r-1.
\end{equation}
This singular point has to be also a zero of $Q$, that is,  
$$c= \sum_{i=0}^{r-1}\delta^{p^\ell}f_\ell
(\eta_\ell).$$
For all other $c\in \F_p$ there are no singular points.
Since \eqref{fprime} has at most $(d-1)^r$ solutions in $\overline{\F_p}^r$ we have $|\cC|\le (d-1)^r$.

\section*{Acknowledgments}

The second and third author are partially supported by the Austrian Science Fund FWF,
Projects P 31762 and P 30405, respectively. 

The authors wish to thank Igor Shparlinski for pointing to Deligne's bound for projective surfaces and related theorems.

We wish to thank the anonymous referees for very useful comments.

\bibliographystyle{plain}
\bibliography{newbib}

\begin{thebibliography}{10}

\bibitem{Bou13}
Jean Bourgain.
\newblock Prescribing the binary digits of primes.
\newblock {\em Israel J. Math.}, 194(2):935--955, 2013.

\bibitem{Bou15}
Jean Bourgain.
\newblock Prescribing the binary digits of primes, {II}.
\newblock {\em Israel J. Math.}, 206(1):165--182, 2013.

\bibitem{CLO15}
David~A. Cox, John Little, and Donal O'Shea.
\newblock {\em Ideals, varieties, and algorithms}.
\newblock Undergraduate Texts in Mathematics. Springer, Cham, fourth edition,
  2015.
\newblock An introduction to computational algebraic geometry and commutative
  algebra.

\bibitem{DMS15}
C\'ecile Dartyge, Christian Mauduit, and Andr\'as S\'ark\"ozy.
\newblock Polynomial values and generators with missing digits in finite
  fields.
\newblock {\em Funct. Approx. Comment. Math.}, 52(1):65--74, 2015.

\bibitem{DS13}
C\'ecile Dartyge and Andr\'as S\'ark\"ozy.
\newblock The sum of digits functions in finite fields.
\newblock {\em Proc. Amer. Math. Soc.}, 141(12):4119--4124, 2013.

\bibitem{De74}
Pierre Deligne.
\newblock La conjecture de {W}eil. {I}.
\newblock {\em Inst. Hautes \'{E}tudes Sci. Publ. Math.}, 43:273--307, 1974.

\bibitem{DES16}
Rainer Dietmann, Christian Elsholtz, and Igor~E. Shparlinski.
\newblock Prescribing the binary digits of squarefree numbers and quadratic
  residues.
\newblock {\em Trans. Amer. Math. Soc.}, 369(12):8369--8388, 2017.

\bibitem{DMR19}
Michael Drmota, Christian Mauduit, and Jo\"{e}l Rivat.
\newblock Normality along squares.
\newblock {\em J. Eur. Math. Soc. (JEMS)}, 21(2):507--548, 2019.

\bibitem{G17}
Mikhail~R. Gabdullin.
\newblock On the squares in the set of elements of a finite field with
  constraints on the coefficients of its basis expansion.
\newblock {\em Mat. Zametki}, 100(6):807--824, 2016.

\bibitem{H91}
Christopher Hooley.
\newblock On the number of points on a complete intersection over a finite
  field.
\newblock {\em J. Number Theory}, 38(3):338--358, 1991.
\newblock With an appendix by Nicholas M. Katz.

\bibitem{LN97}
Rudolf Lidl and Harald Niederreiter.
\newblock {\em Finite fields}, volume~20 of {\em Encyclopedia of Mathematics
  and its Applications}.
\newblock Cambridge University Press, Cambridge, second edition, 1997.

\bibitem{M19}
Sam Mattheus.
\newblock Trace of products in finite fields from a combinatorial point of
  view.
\newblock {\em SIAM J. Discrete Math.}, 33(4):2126--2139, 2019.

\bibitem{MR09}
Christian Mauduit and Jo\"el Rivat.
\newblock La somme des chiffres des carr\'es.
\newblock {\em Acta Math.}, 203(1):107--148, 2010.

\bibitem{MR10}
Christian Mauduit and Jo\"el Rivat.
\newblock Sur un probl\`eme de {G}elfond : la somme des chiffres des nombres
  premiers.
\newblock {\em Ann. of Math. (2)}, 171(3):1591--1646, 2010.

\bibitem{M16}
James Maynard.
\newblock Primes with restricted digits.
\newblock {\em Invent. Math.}, 217(1):127--218, 2019.

\bibitem{HFF}
Gary~L. Mullen and Daniel Panario, editors.
\newblock {\em Handbook of finite fields}.
\newblock Discrete Mathematics and its Applications (Boca Raton). CRC Press,
  Boca Raton, FL, 2013.

\bibitem{M18}
Clemens M\"{u}llner.
\newblock The {R}udin-{S}hapiro sequence and similar sequences are normal along
  squares.
\newblock {\em Canad. J. Math.}, 70(5):1096--1129, 2018.

\bibitem{P19}
Sam Porritt.
\newblock Irreducible polynomials over a finite field with restricted
  coefficients.
\newblock {\em Canad. Math. Bull.}, 62(2):429--439, 2019.

\bibitem{SW19}
Zhimin Sun and Arne Winterhof.
\newblock On the maximum order complexity of subsequences of the {T}hue-{M}orse
  and {R}udin-{S}hapiro sequence along squares.
\newblock {\em Int. J. Comput. Math. Comput. Syst. Theory}, 4(1):30--36, 2019.

\bibitem{Sw18b}
Cathy Swaenepoel.
\newblock On the sum of digits of special sequences in finite fields.
\newblock {\em Monatsh. Math.}, 187(4):705--728, 2018.

\bibitem{Sw18a}
Cathy Swaenepoel.
\newblock Prescribing digits in finite fields.
\newblock {\em J. Number Theory}, 189:97--114, 2018.

\bibitem{Sw18c}
Cathy Swaenepoel.
\newblock Trace of products in finite fields.
\newblock {\em Finite Fields Appl.}, 51:93--129, 2018.

\bibitem{Sw20}
Cathy Swaenepoel.
\newblock Prime numbers with a positive proportion of preassigned digits.
\newblock {\em Proc. Lond. Math. Soc. (3)}, 121(1):83--151, 2020.

\end{thebibliography}

\end{document}